\numberwithin{equation}{section}
\begin{document}

\newtheorem{theorem}{Theorem}[section]
\newtheorem{proposition}[theorem]{Proposition}
\newtheorem{lemma}[theorem]{Lemma}
\newtheorem{corollary}[theorem]{Corollary}

\theoremstyle{definition}
\newtheorem{definition}[theorem]{Definition}

\newcommand\Ga{{\Gamma}}
\newcommand\Z{{\mathbb Z}}

\title[Higgs bundles on Sasakian manifolds]{Higgs bundles on Sasakian manifolds}

\author[I. Biswas]{Indranil Biswas}
\address{School of Mathematics, Tata Institute of Fundamental
Research, Homi Bhabha Road, Bombay 400005, India}
\email{indranil@math.tifr.res.in}

\author[M. Mj]{Mahan Mj}
\address{School of Mathematics, Tata Institute of Fundamental
Research, Homi Bhabha Road, Bombay 400005, India}
\email{mahan@math.tifr.res.in}

\subjclass[2010]{14P25, 57M05, 14F35, 20F65 (Primary); 57M50, 57M07, 20F67 (Secondary)}

\keywords{Sasakian manifold, Higgs bundle, flat bundle, Sasakian group.}

\date{}

\begin{abstract}
We extend the Donaldson-Corlette-Hitchin-Simpson
correspondence between Higgs bundles and flat connections on compact K\"ahler manifolds
to compact quasi-regular Sasakian manifolds. A particular consequence is the translation
of restrictions on K\"ahler groups proved using the Donaldson-Corlette-Hitchin-Simpson
correspondence to fundamental groups of compact Sasakian manifolds (not necessarily
quasi-regular). 
\end{abstract}

\maketitle

\tableofcontents

\section{Introduction}\label{se1}

The aim of this paper is to extend the Donaldson-Corlette-Hitchin-Simpson 
correspondence between Higgs bundles and flat connections on compact K\"ahler 
manifolds to the class of compact quasi-regular Sasakian manifolds. In order to do 
this, we are led to the natural notions of a Higgs vector bundle on a Sasakian 
manifold $M$ and that of a holomorphic Sasakian principal $G-$bundle, where $G$ is 
a connected complex reductive algebraic group.
Further, in order to establish a canonical correspondence, we are forced to look at 
a special class of Sasakian $G$--Higgs bundles that naturally descend to the base 
projective variety $M/\text{U}(1)$ of the Sasakian manifold $M$, at least up to a 
finite sheeted cover of the base. Such Higgs bundles are referred to as virtually basic 
(Definition \ref{def2}); see Section \ref{ex} for an example showing the necessity of this hypothesis.
 The following is the main result proved here (see Theorem \ref{main}).

\begin{theorem}\label{thm0}
Let $G$ be a connected reductive complex affine algebraic group. Let $M$ be a
quasi-regular Sasakian manifold
with fundamental group $\Gamma$. Any homomorphism 
$$\rho\, :\, \Gamma\,\longrightarrow\, G$$ with the Zariski closure of $\rho(\Gamma)$
reductive canonically gives a virtually basic polystable principal $G$--Higgs bundle on
$M$ with vanishing rational characteristic classes. Conversely, any 
virtually basic polystable principal $G$--Higgs bundle on $M$ with 
vanishing rational characteristic classes corresponds to a flat principal
$G$--bundle on $M$ with the property that the Zariski closure of the monodromy
representation is reductive.
\end{theorem}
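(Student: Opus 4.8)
The plan is to establish the correspondence by reducing it, via the quasi-regular structure, to the classical Donaldson-Corlette-Hitchin-Simpson theory on the base projective orbifold, and then lifting back up to $M$. The key structural fact is that a quasi-regular Sasakian manifold $M$ fibers as a circle bundle (in the orbifold sense) over a projective variety $X = M/\mathrm{U}(1)$, which carries a natural K\"ahler orbifold structure; one expects a holomorphic Sasakian principal $G$--bundle on $M$ to correspond, after passing to a finite-sheeted cover of $X$ where the orbifold structure is resolved, to an honest holomorphic principal $G$--bundle on a smooth projective variety. The \emph{virtually basic} hypothesis is precisely what guarantees that the Higgs data descends in this fashion, so the first step is to make this descent explicit and functorial.

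I would proceed in two directions separately. For the forward direction, given $\rho\colon \Ga \to G$ with reductive Zariski closure, I would first pass to the finite cover on which the representation descends to the fundamental group of a smooth projective variety $\widetilde{X}$ (using the virtually basic framework and the relation between $\pi_1(M)$ and the orbifold fundamental group of $X$). On $\widetilde{X}$ the classical correspondence of Corlette and Simpson applies: the reductivity of the Zariski closure yields a harmonic metric, hence a polystable Higgs bundle with vanishing rational Chern classes. I would then transport this Higgs bundle back to $M$ by pullback along the circle fibration and the cover, checking that the resulting Sasakian $G$--Higgs bundle is again polystable, has vanishing rational characteristic classes, and is virtually basic by construction. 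For the converse, starting from a virtually basic polystable $G$--Higgs bundle on $M$ with vanishing rational characteristic classes, I would descend it to a polystable Higgs bundle with vanishing Chern classes on the smooth cover $\widetilde{X}$, invoke Simpson's theorem to produce a flat bundle with reductive monodromy, and then pull this flat connection back to $M$, arguing that the resulting representation of $\Ga$ has reductive Zariski closure.

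The main obstacle I anticipate lies in controlling the descent and ascent of the \emph{stability} and \emph{characteristic class} data across the circle fibration and the finite cover. Stability on a Sasakian manifold must be formulated with respect to the transverse K\"ahler structure, and one must verify that transverse degree and slope are compatible with the orbifold slope downstairs; in particular, showing that polystability of the Sasakian Higgs bundle is equivalent to polystability of its descent requires a careful comparison of the admissible reductions and destabilizing subsheaves in the two settings. A related delicate point is ensuring that the notions of vanishing rational characteristic classes match up under pullback: the circle fibration contributes a class that must be shown not to interfere, which is again where the virtually basic hypothesis does the essential work. Once these compatibilities are in place, the two directions are mutually inverse by the functoriality of both the descent construction and the classical correspondence on $\widetilde{X}$, which yields the stated canonical bijection.
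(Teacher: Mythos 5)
Your overall architecture (pass to a finite cover, descend to the base, apply the classical Donaldson--Corlette--Hitchin--Simpson correspondence there, pull back) agrees with the paper's, but the proposal omits the one ingredient that makes the forward direction possible. You say you would ``pass to the finite cover on which the representation descends to the fundamental group of a smooth projective variety $\widetilde{X}$, using the virtually basic framework.'' This is circular: virtual basicness is a hypothesis on \emph{Higgs bundles}, and in the forward direction you are handed only a representation $\rho$; there is no framework to invoke. What has to be \emph{proved} is that for every $\rho$ with reductive Zariski closure there exists a finite \'etale cover $\widetilde{M}\to M$ on which $\rho$ annihilates the classes of the ${\rm U}(1)$--orbits --- otherwise $\rho$ factors through the fundamental group of no base whatsoever and the reduction to the K\"ahler case never starts. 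This is exactly the paper's main new input: $\Gamma$ is (up to a finite cyclic quotient of the fiber class) a central extension of $\pi_1^{orb}(B)$ by $\Z$ whose Euler class is nonzero, because the curvature of the natural connection on $M\to B$ is the K\"ahler form; Lemma \ref{fin} then shows that a Zariski dense representation of such an extension into a reductive $G$ sends the central $\Z$ to a \emph{finite} subgroup (via the isogeny $G\to (G/[G,G])\times (G/Z(G))$ and the presentation with relations $r_jt^{i_j}$, some $i_j\neq 0$), and Malcev plus Selberg (Corollary \ref{torsionfree}) produce a finite-index subgroup on which this finite image becomes trivial. Without this chain of arguments the forward direction has no proof.

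There is a second genuine gap: you assume that a finite-sheeted cover of $X=M/{\rm U}(1)$ ``resolves'' the orbifold structure, i.e.\ that some finite cover of the base is a smooth projective variety. This is false in general; for instance, weighted Sasakian structures on $S^3$ fiber over weighted projective lines, which are bad orbifolds admitting no smooth finite covers. The paper never needs such a cover: after passing to the cover $\widetilde{M}$ produced by the group theory above, it treats $\widetilde{B}=\widetilde{M}/{\rm U}(1)$ merely as a possibly singular projective variety, takes a desingularization $\delta\colon Z\to\widetilde{B}$, runs the classical correspondence on $Z$, and descends back to $\widetilde{B}$ using Koll\'ar's theorem that $\delta_*\colon\pi_1(Z)\to\pi_1(\widetilde{B})$ is an isomorphism together with the descent results of Eyssidieux--Sala; Galois equivariance under $\Pi={\rm Gal}(\varphi)$ then pushes everything down from $\widetilde{M}$ to $M$. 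By contrast, the stability and characteristic-class compatibilities that you single out as the main obstacle are largely built into the paper's definitions (degree of a Sasakian Higgs bundle is defined through reductions over big ${\rm U}(1)$--invariant open sets, and characteristic classes through the descended bundle), so the substantive work lies in the two points above, not there.
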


Given any compact Sasakian manifold, its Riemannian
structure and the Reeb vector field can be 
perturbed to make it quasi-regular Sasakian \cite{Ru}, \cite{OV}. So the fundamental
group of any compact Sasakian manifold is actually the 
fundamental group of some compact quasi-regular Sasakian manifold. Therefore, in view
of Theorem \ref{thm0} all 
restrictions on K\"ahler groups proved using the Donaldson-Corlette-Hitchin-Simpson 
correspondence (see \cite[p.~53, Lemma 4.7]{Si2}) are applicable to the fundamental 
groups of compact Sasakian manifolds.

\section{Reductive representations of central extensions} 

We shall use below a purely group-theoretic description of the Euler class. Let
$$H \,=\, \langle g_1,\, \cdots,\, g_n\,\mid\, r_1,\, \cdots,\, r_m \rangle $$
be a finitely presented group with generators $g_i$, $1\, \leq\, i\,\leq\, n$,
and relations $r_j$, $1\, \leq\, j\,\leq\, m$, and
let $X$ be a 2-complex realizing this presentation. 
Then a circle bundle $p\,:\, E \,\longrightarrow\, X$ over $X$ corresponds to an exact
sequence
$$\Z \,\longrightarrow\, \pi_1(E)\,\longrightarrow\, \pi_1(X) \,\longrightarrow\, 1$$ and
the Euler class gives the obstruction to constructing a continuous
section of the projection $p$. The obstruction class can be evaluated on every 2-cell 
in $X$. Group theoretically, if
$$1\,\longrightarrow\, \Z \,\longrightarrow\, \pi_1(E)\,\longrightarrow\, \pi_1(X)
\,\longrightarrow\, 1$$
is a 
central extension, then the obstruction can be computed algebraically. Each 2-cell of 
$X$ corresponds to some relation $r_i$. Also, if $t$ denotes a generator for the 
central $\Z$, then we can first construct a trivialization of the circle bundle over 
the one-skeleton of $X$. Given this, the restriction of $E$ over a 2-cell $\sigma_i$ 
of $X$ induces the obstruction class. Group theoretically this corresponds to $r_i = 
t^{n_i}$ for some integer $n_i$ which coincides with the Euler class evaluated on the 
2-cell $\sigma_i$. We use this description in Lemma \ref{fin}.

\begin{lemma}\label{fin}
Let $\Ga$ be a finitely presented central extension of a group $Q$
by $\Z$ with non-zero 
Euler class. Let $G$ be a connected reductive complex affine algebraic group and
$\rho\,:\, \Ga \,\longrightarrow\, G$ a homomorphism such that
$\rho(\Ga)$ is Zariski dense in $G$. Then $\rho (\Z)$ is finite.
\end{lemma}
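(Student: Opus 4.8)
The plan is to exploit the centrality of the $\Z$ together with the Zariski density to force $\rho(t)$ into the center of $G$, and then to convert the non-vanishing of the Euler class into a torsion relation on the image of $t$ in the abelianization.

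First I would observe that since the generator $t$ of the central $\Z$ commutes with every element of $\Ga$, its image $\rho(t)$ commutes with every element of $\rho(\Ga)$; as $\rho(\Ga)$ is Zariski dense in $G$ and the centralizer of $\rho(t)$ is Zariski closed, $\rho(t)$ lies in the center $Z(G)$. Because $G$ is connected reductive, I would then record the standard structure $G \,=\, Z(G)^{\circ}\cdot [G,G]$, where $[G,G]$ is semisimple and $Z(G)^{\circ}$ is a central torus meeting $[G,G]$ in a finite subgroup; consequently the quotient $T \,:=\, G/[G,G]$ is a torus and the composite $Z(G)^{\circ}\hookrightarrow G\to T$ is an isogeny. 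The consequence I want to extract is the following detection principle: a central element of $G$ has finite order if and only if every algebraic character $\chi\colon G\to \mathbb{C}^{*}$ sends it to a root of unity. Indeed, characters of $G$ are exactly the characters of $T$; if $z\in Z(G)$ satisfies $\chi(z)$ a root of unity for all $\chi$, then writing $z^{N}\in Z(G)^{\circ}$ (possible since $Z(G)/Z(G)^{\circ}$ is finite) one finds every character of the torus $Z(G)^{\circ}$ takes a fixed power of $z^{N}$ to a root of unity, whence $z^{N}$, and so $z$, is of finite order.

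Next I would extract the arithmetic content of the Euler class from the group-theoretic description recalled before the statement. Writing $a_{ji}$ for the exponent sum of $g_i$ in the relator $r_j$, the presentation $r_j \,=\, t^{n_j}$ of $\Ga$ abelianizes to the relation $\sum_i a_{ji}\,\bar g_i \,=\, n_j\,\bar t$ in $H_1(\Ga;\Z)$, where $\bar g_i,\bar t$ denote images of $g_i,t$. A $2$-cycle of $X$ is an integer vector $\lambda=(\lambda_j)$ with $\sum_j \lambda_j a_{ji}=0$ for all $i$, and the Euler class evaluated on this cycle is $\sum_j \lambda_j n_j$. Pairing the abelianized relations against such a $\lambda$ collapses the $\bar g_i$ terms and yields $\big(\sum_j \lambda_j n_j\big)\,\bar t \,=\, 0$ in $H_1(\Ga;\Z)$. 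Hence if the Euler class is non-zero on some $2$-cycle $\lambda$, the image $\bar t$ of $t$ is a torsion element of $H_1(\Ga;\Z)$.

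Finally I would combine the two halves. For any character $\chi\colon G\to \mathbb{C}^{*}$ the homomorphism $\chi\circ\rho\colon \Ga\to \mathbb{C}^{*}$ takes values in an abelian group and so factors through $H_1(\Ga;\Z)$; therefore $\chi(\rho(t))$ is the value of the induced homomorphism $H_1(\Ga;\Z)\to \mathbb{C}^{*}$ on the torsion element $\bar t$, and is thus a root of unity. By the detection principle of the first step, $\rho(t)$ has finite order, that is, $\rho(\Z)$ is finite. The step I expect to be the main obstacle is precisely the passage from ``Euler class non-zero'' to ``there is a $2$-cycle $\lambda$ with $\sum_j \lambda_j n_j\neq 0$'': this is the assertion that the Euler class pairs non-trivially with $H_2$, equivalently that $\bar t$ is torsion in $H_1(\Ga;\Z)$. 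This is exactly what holds when the Euler class is non-torsion, as is automatic in the Sasakian situation where it is the class of the transverse K\"ahler form on the base; the remaining steps are then formal.
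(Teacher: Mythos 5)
Your argument shares only its opening step with the paper's (Zariski density forces $\rho(t)$ into $Z(G)$); after that the two routes genuinely diverge. The paper passes to the finite-kernel surjection $h\colon G \to (G/[G,G]) \times (G/Z(G))$, descends the second coordinate to $h'\colon Q \to G/Z(G)$, and then asserts, for a single relation $r_jt^{i_j}$ with $i_j \neq 0$, that $h\circ\rho(t^{i_j}) = h\circ\rho(r_j^{-1})$ lies in $\bigl((G/[G,G])\times\{1\}\bigr) \cap \bigl(\{1\}\times(G/Z(G))\bigr) = \{1\}$. You instead reduce finiteness of a central element to all of its algebraic character values being roots of unity (via the isogeny $Z(G)^\circ \to G/[G,G]$), and verify this homologically: characters composed with $\rho$ factor through $H_1(\Gamma;\mathbb{Z})$, and pairing the abelianized relations against a $2$-cycle on which the Euler class is nonzero shows that $\bar t$ is torsion there. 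Your mechanism is in fact the correct one, whereas the paper's pivotal membership claim is unjustified: the inclusion $h\circ\rho(r_j^{-1}) \in \{1\}\times(G/Z(G))$ would require the image of $\rho(r_j)$ in the torus $G/[G,G]$ to vanish, but $r_j$ is a word in lifts of generators of $Q$ and only its $G/Z(G)$-component is forced to be trivial. Working with a single relation, rather than with a $2$-cycle combination that cancels the generator contributions as you do, is exactly what goes wrong.

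The obstacle you flag at the end is genuine, and it is a defect of the lemma as stated rather than of your proof. A nonzero Euler class need not pair nontrivially with any $2$-cycle: a nonzero torsion class in $H^2(Q;\mathbb{Z})$ can annihilate all of $H_2(Q;\mathbb{Z})$, and then $\bar t$ has infinite order in $H_1(\Gamma;\mathbb{Z})$. Indeed the statement is false as literally given: take the non-split central extension $1 \to \mathbb{Z} \xrightarrow{\,\times 2\,} \mathbb{Z} \to \mathbb{Z}/2 \to 1$, i.e.\ $\Gamma = \langle t, q \mid [t,q],\, q^2t^{-1}\rangle$, with $G = \mathbb{C}^*$ and $\rho$ sending $q$ to $2$; then $\rho(\Gamma) = 2^{\mathbb{Z}}$ is Zariski dense while $\rho(\mathbb{Z}) = 4^{\mathbb{Z}}$ is infinite, and the paper's displayed identity visibly fails there since $h\circ\rho(r_1^{-1}) = 1/4 \neq 1$. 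Your strengthened hypothesis --- the Euler class is non-torsion, equivalently (for finitely presented $Q$, by universal coefficients) it pairs nontrivially with $H_2(Q;\mathbb{Z})$ --- is precisely what both your argument and any repaired version of the paper's argument need, and it does hold where the lemma is actually applied: in Proposition \ref{finsas} the class maps to the K\"ahler class $[\omega] \neq 0$ in $H^2(B;\mathbb{Q})$, hence is non-torsion. Read as a proof of that corrected statement, your proposal is complete and correct.
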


\begin{proof}
Let
\begin{equation}\label{ex1}
1\,\longrightarrow\, \Z\,\longrightarrow\, \Ga\,\longrightarrow\, Q\,\longrightarrow\, 1
\end{equation}
be the short exact sequence of the central extension $\Ga$.
Since $\rho (\Ga)$ is Zariski dense in $G$, and $\rho(\Z)$ commutes with
$\rho (\Ga)$, it follows that $\rho (\Z)$ lies in the center of $G$. Let $Z(G)\, \subset\,
G$ be the center. The natural homomorphism
$$
h\, :\, G\, \longrightarrow\, (G/[G,\, G])\times (G/Z(G))
$$
is surjective with finite kernel because $G$ is reductive. We note that $G/[G,\, G]$ is isomorphic to a product
of copies of the multiplicative group of nonzero complex numbers,
and $G/Z(G)$ is semi-simple with trivial center. Therefore, the homomorphism
$h\circ \rho$ sends the central $\Z\, \subset\, \Gamma$ into the subgroup
$G/[G,\, G]\times\{1\}\, \subset\, (G/[G,\, G])\times (G/Z(G))$. Consequently, $h\circ
\rho$ descends to a homomorphism
$$
h'\, :\, Q \, \longrightarrow\, G/Z(G)\, .
$$
Note that
$h\circ \rho(\Gamma)$ is Zariski dense in $(G/[G,\, G])\times (G/Z(G))$ because
$\rho(\Gamma)\, \subset\, G$ is Zariski dense and $h$ is surjective. Hence
$h'(Q)$ is Zariski dense in $G/Z(G)$. It should be clarified that
we do not exclude the possibility that the latter group is trivial.
What we shall use below is the fact that $h'(Q)$ may be regarded as a subgroup of 
$\{ 1\}\times (G/Z(G)) \subset (G/[G,\, G])\times (G/Z(G))$.

Let $\langle q_1,\, \cdots, \,q_m \,:\, r_1,\, \cdots ,\, r_s\rangle$ be a presentation
for $Q$. Then
there exists a presentation of $\Ga$ of the form 
$$\langle t,\, q_1,\, \cdots,\, q_m \,:\, [t,\, q_1],\, \cdots,\, [t,\, q_m],
\, r_1t^{i_1}, \,\cdots ,\, r_st^{i_s}\rangle\, .$$
Since the Euler class of the extension in \eqref{ex1} is non-zero, one of
the $t^{i_j}$'s is non-zero (see the discussion on Euler class preceding the lemma).
Using this and the fact that $h'(Q)$ is Zariski dense in $G/Z(G)$, it follows that 
$$h\circ \rho (t^{i_j})
\,=\, h\circ \rho (r_j^{-1})\,\in \,
((G/[G,\, G])\times\{1\})\cap (\{1\}\times (G/Z(G)))\,=\, \{ 1 \}\, ,$$
forcing $h\circ\rho (\Z)$ to be finite. This implies that $\rho (\Z)$ is finite because
the kernel of $h$ is finite.
\end{proof}

\begin{corollary}\label{torsionfree}
Take $\Gamma$ and $\rho$ as in Lemma \ref{fin}. Then the image
$\rho (\Gamma)$ is virtually torsion-free. Hence there exists a finite index subgroup
$\Gamma_1$ of $\Gamma$ such that the image $\rho (\Z \cap \Gamma_1)$ is trivial.
\end{corollary}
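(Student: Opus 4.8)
The plan is to derive both assertions from two ingredients: Selberg's lemma on finitely generated linear groups, together with the finiteness of $\rho(\Z)$ already supplied by Lemma \ref{fin}. First I would realize $\rho(\Ga)$ as a finitely generated linear group. Since $G$ is a connected reductive complex affine algebraic group, it admits a faithful algebraic embedding $G\,\hookrightarrow\,\mathrm{GL}_n(\mathbb{C})$ for some $n$. As $\Ga$ is finitely presented it is finitely generated, hence so is its image $\rho(\Ga)\,\subseteq\,\mathrm{GL}_n(\mathbb{C})$. Selberg's lemma then produces a torsion-free subgroup $H\,\subseteq\,\rho(\Ga)$ of finite index, which already establishes the first assertion that $\rho(\Ga)$ is virtually torsion-free.

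For the second assertion I would set $\Ga_1\,=\,\rho^{-1}(H)\,\subseteq\,\Ga$. This is a finite index subgroup of $\Ga$, since its index in $\Ga$ equals the index of $H$ in $\rho(\Ga)$; moreover $\rho(\Ga_1)\,=\,H$ because $H\,\subseteq\,\rho(\Ga)$. To see that $\rho(\Z\cap\Ga_1)$ is trivial, take any $z\,\in\,\Z\cap\Ga_1$. Then $\rho(z)\,\in\,\rho(\Z)$, which is finite by Lemma \ref{fin}, so $\rho(z)$ is a torsion element of $G$; but also $\rho(z)\,\in\,\rho(\Ga_1)\,=\,H$, and $H$ is torsion-free. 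Hence $\rho(z)\,=\,1$, giving $\rho(\Z\cap\Ga_1)\,=\,\{1\}$ as required.

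I expect no genuine obstacle here: the substantive content is entirely packaged in Lemma \ref{fin}, which forces $\rho(\Z)$ to be finite. The only conceptual point worth isolating is that, once $\rho(\Z)$ consists solely of torsion elements, it is automatically annihilated by passing to the preimage of \emph{any} torsion-free finite index subgroup of $\rho(\Ga)$ — and such a subgroup is exactly what Selberg's lemma provides. The remainder is bookkeeping with indices and preimages.
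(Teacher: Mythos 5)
Your proof is correct and follows essentially the same route as the paper: Selberg's lemma applied to the finitely generated linear group $\rho(\Gamma)$ gives virtual torsion-freeness, and the finiteness of $\rho(\Z)$ from Lemma \ref{fin} then forces $\rho(\Z\cap\Gamma_1)$ to be trivial. The only cosmetic differences are that the paper also cites Malcev's theorem for residual finiteness (not actually needed for the conclusion) and leaves the preimage construction $\Gamma_1=\rho^{-1}(H)$ implicit, which you spell out.
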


\begin{proof}
Since $\Gamma$ is finitely presented, so is $\rho (\Gamma)$. As $\rho
(\Gamma)$ is also linear, it follows from Malcev's theorem, \cite{Ma}, that it is
residually finite. Further by Selberg's lemma \cite{Se} (see also \cite{Pl}) the image
$\rho (\Gamma)$ is virtually torsion-free.

In other words, there exists a finite index subgroup
$\Gamma_1$ of $\Gamma$ such that $\rho (\Gamma_1)$ is torsion-free. Since $\rho (\Z)$
is finite by Lemma \ref{fin}, it follows that 
$\rho (\Z \cap \Gamma_1)$ is trivial.
\end{proof}

\section{Sasakian groups}

We refer to \cite{BG} for definition and basic properties of Sasakian manifolds. 
Fundamental groups of closed Sasakian manifolds will be called {\it Sasakian groups}.
The Sasakian structure on any Sasakian manifold $M$ can be suitably perturbed producing
a quasi-regular Sasakian structure \cite{Ru}, \cite[p.~161, Theorem~1.2]{OV}. In
particular, every Sasakian group is the fundamental group of some closed quasi-regular
Sasakian manifold. In view of this, henceforth all Sasakian manifolds considered here
will be assumed to be quasi-regular.

Let $M$ be a quasi-regular closed Sasakian manifold.
The K\"ahler orbifold base $M/{\rm U}(1)$ of $M$ will be denoted by $B$ \cite{Ru},
\cite[p.~208, Theorem~7.1.3]{BG}. Let
\begin{equation}\label{f}
f\, :\, M\, \longrightarrow\, B
\end{equation}
be the quotient map. The K\"ahler form on $B$ will be denoted by $\omega$.
Fix a point $x_0\, \in\, M$ such that the fiber $$F\,:=\,
f^{-1}(f(x_0))$$ of $f$ over $f(x_0)$ is regular, meaning the action of ${\rm U}(1)$
on $F$ is free. We denote
$$
\Gamma\, :=\, \pi_1(M,\, x_0)\, .
$$

\begin{proposition}\label{finsas}
Let $G$ be a reductive complex affine algebraic
group, and let $$\rho\,:\, \Ga
\,\longrightarrow\, G$$ be a homomorphism whose image is Zariski dense in $G$.
Then $\rho (\pi_1(F,\, x_0))$ is a finite subgroup of $G$.
\end{proposition}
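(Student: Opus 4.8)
The plan is to realize $\pi_1(F,\, x_0)$ as a central subgroup of $\Gamma$ coming from the Seifert (Boothby--Wang) structure of the orbifibration $f$, and then to reduce the statement to Lemma \ref{fin}. Since $M$ is closed, $\Gamma$ is finitely presented, so any quotient of $\Gamma$ realized as a central extension below is finitely presented as well, as required by that lemma.

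First I would use that $F$ is a regular fiber, i.e. a free ${\rm U}(1)$--orbit; hence $F\,\cong\, S^1$ and $\pi_1(F,\, x_0)\,\cong\, \Z$. The homomorphism $\iota_*\,\colon\, \pi_1(F,\, x_0)\,\to\, \Gamma$ induced by the inclusion has central image $A\,:=\,\iota_*(\pi_1(F,\, x_0))$: a generating loop of $F$ is the orbit of the circle action through $x_0$, and translating any based loop in $M$ by this action produces a free homotopy showing that the fiber class commutes with every element of $\Gamma$. In particular $A$ is cyclic, being a quotient of $\Z$. If $A$ is finite, then $\rho(\pi_1(F,\, x_0))\,=\,\rho(A)$ is a finite subgroup of $G$ and there is nothing further to prove, so the substantive case is $A\,\cong\,\Z$, i.e. when $\iota_*$ is injective.

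In that case the homotopy exact sequence of the orbifibration $f$ gives a central extension
$$1\,\longrightarrow\, \Z\,\longrightarrow\, \Gamma\,\longrightarrow\, Q\,\longrightarrow\, 1\, ,$$
where $Q\,=\,\pi^{\mathrm{orb}}_1(B)$ is the orbifold fundamental group of the K\"ahler base. I would then check that this extension has non-zero Euler class, so that Lemma \ref{fin} applies verbatim and forces $\rho(\Z)\,=\,\rho(A)$, hence $\rho(\pi_1(F,\, x_0))$, to be finite.

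The crux is the non-vanishing of the Euler class, and this is exactly where the K\"ahler hypothesis enters. For a quasi-regular Sasakian manifold the Boothby--Wang description identifies the orbifold Euler class $e\,\in\, H^2(B;\,\mathbb{Q})$ of the circle orbibundle $f$ with a non-zero rational multiple of the K\"ahler class $[\omega]$; since $\int_B \omega^{\dim_{\mathbb{C}} B}\,>\,0$ we have $[\omega]\,\neq\, 0$, and therefore $e\,\neq\, 0$ in $H^2(B;\,\mathbb{Q})$. On the other hand, the group-theoretic Euler class of the central extension above, living in $H^2(Q;\,\Z)$, pulls back to $e$ under the classifying map $B\,\to\, K(Q,\,1)$ induced by $\Gamma\,\twoheadrightarrow\, Q$ on orbifold fundamental groups; were it zero, its pullback $e$ would vanish too. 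Hence the group-theoretic Euler class is non-zero, equivalently one of the exponents $n_i$ in the presentation described just before Lemma \ref{fin} is non-zero. I expect the only delicate point to be precisely here: keeping the orbifold cohomology and orbifold fundamental group bookkeeping consistent, and confirming that the extension class of the fibration agrees with the Euler class used in the group-theoretic formulation of Section~2. With that identification in place, Lemma \ref{fin} finishes the proof.
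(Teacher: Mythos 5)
Your proposal is correct and takes essentially the same route as the paper's proof: reduce to the case where the fiber class generates an infinite cyclic central subgroup (the finite case being immediate), obtain the central extension $1 \to \Z \to \Gamma \to \pi_1^{orb}(B) \to 1$ from the homotopy sequence of the orbifibration, and invoke Lemma \ref{fin} after noting that the Euler class is non-zero because it is represented (up to normalization) by the K\"ahler class $[\omega] \neq 0$. The paper phrases the case split in terms of the image of $\pi_2^{orb}(B)$ in $\pi_1(F,x_0) \cong \Z$ rather than in terms of the finiteness of the fiber class, and it asserts the identification of the group-theoretic Euler class with the first Chern class of the circle bundle with the same brevity you do, so the two arguments match step for step.
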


\begin{proof}
For the map $f$ in \eqref{f}, the Sasakian manifold $M$ is a principal ${\rm U}(1)$--bundle
over the orbifold $B$. We have the homotopy exact sequence for this fibration:
$$\pi_2^{orb}(B,\, f(x_0)) \,\longrightarrow\, \Z \,= \,\pi_1(F,\,
x_0) \, \stackrel{\eta}{\longrightarrow}\, \pi_1(M,\, x_0)
\,\longrightarrow\,\pi_1^{orb} (B,\, f(x_0)) \,\longrightarrow \,1\, ,$$
where $\pi_i^{orb} (B,\, f(x_0))$ is the orbifold homotopy group of $B$.

If the image of $\pi_2^{orb}(B,\, f(x_0))$ in $\pi_1(F,\, x_0)$ is non-trivial, say $p\Z$, then we have an
exact sequence 
$$ 1\,\longrightarrow \,\eta(\pi_1(F,\, x_0))\,=\,
\Z/p\Z \,\longrightarrow\, \pi_1(M,\, x_0) \,\longrightarrow\,
\pi_1^{orb} (B,\, f(x_0)) \,\longrightarrow\, 1\, . $$
The lemma follows in this case because $\rho (\pi_1(F,\, x_0))$ is a quotient
of $\Z/p\Z$.

If the image of $\pi_2^{orb}(B,\, f(x_0))$ in $\pi_1(F,\, x_0)$ is trivial, we have a
short exact sequence 
\begin{equation}\label{extn}
1\,\longrightarrow\, \Z \,\longrightarrow\, \pi_1(M,\, x_0) \,\longrightarrow\,
\pi_1^{orb} (B,\, f(x_0)) \,\longrightarrow\, 1\, ,
\end{equation}
and hence $\pi_1(M,\, x_0)$ is a central extension of $\pi_1^{orb}(B)$ by $ \Z$. 

In view of Lemma \ref{fin}, it suffices to show that the Euler class of the extension 
in \eqref{extn} is non-zero. This is equivalent to showing that the first Chern class 
of the principal ${\rm U}(1)$--bundle $M$ over $B$ in \eqref{f} is non-zero.

Consider the connection $\nabla$ on the principal
${\rm U}(1)$--bundle $M\, \stackrel{f}{\longrightarrow}\, B$ given by the Sasakian
metric on $M$. So the horizontal distribution for $\nabla$ is given by the
orthogonal complement of the Reeb vector field $\xi$ on $M$ associated to the
action of ${\rm U}(1)$ on it. The curvature of $\nabla$ is the K\"ahler form
$\omega$ on $B$. The cohomology class of this curvature is the first Chern class
of the principal ${\rm U}(1)$--bundle $M$ over $B$. The cohomology class
$[\omega]\,\in\, H^2(B,\, {\mathbb Q})$ of $\omega$
is non-zero because $\omega$ is a K\"ahler form. Hence the Euler class of the extension in
\eqref{extn} is non-zero. As noted before, this completes the proof using Lemma \ref{fin}.
\end{proof}

\begin{corollary}\label{finsascor}
Let $\rho\,:\, \Ga \,\longrightarrow\, G$ be a Zariski dense representation, where $G$ as before
is a complex connected reductive affine algebraic group.
Then $M$ admits a finite-sheeted unramified cover $M_1$ such that for any fiber $F_1$
(not necessarily smooth) of the induced quasi-regular Sasakian structure on $M_1$, the
image $\rho (\pi_1(F_1))$ is trivial. (The image of $\pi_1(F_1)$ in $\pi_1(M,\, x_0)$
is unique up to a conjugation --- it depends on the choice of a base point in $F_1$ and a homotopy class of path from $x_0$
to the image of the base point in $M$.)

The same conclusion holds for a representation $\rho\,:\, \Ga \,\longrightarrow\, G$ such
that the Zariski closure of $\rho(\Gamma)$ in $G$ is reductive.
\end{corollary}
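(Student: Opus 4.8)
The plan is to play Proposition \ref{finsas} against Corollary \ref{torsionfree}: the fiber class has finite image under $\rho$, while the whole image $\rho(\Gamma)$ is virtually torsion-free, so a single finite cover simultaneously trivializes the fiber. I would set this up as follows. Write $A \subset \Gamma$ for the image $\eta(\pi_1(F,x_0))$ of the fiber class; since $F$ is (generically) a free ${\rm U}(1)$--orbit, $A$ is central in $\Gamma$, and Proposition \ref{finsas} gives that $\rho(A) \subset G$ is finite. By Corollary \ref{torsionfree}, $\rho(\Gamma)$ is virtually torsion-free, so I would pick a torsion-free finite-index subgroup $H \subset \rho(\Gamma)$ and put $\Gamma_1 := \rho^{-1}(H)$, taking $M_1 \to M$ to be the associated finite unramified cover with its pulled-back quasi-regular Sasakian structure.

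The next step is to control the fibers of $M_1$. Each fiber $F_1$ of $M_1$ is a connected component of the preimage of a fiber of $M$, hence a finite cover of a circle, so the composite $\pi_1(F_1) \to \pi_1(M_1) = \Gamma_1 \hookrightarrow \Gamma$ lands in $A$ (the fiber class is generated by the generic orbit and is central, so this persists for singular fibers as well). Thus the image of $\pi_1(F_1)$ in $\Gamma$ lies in $A \cap \Gamma_1$, and I would conclude
$$\rho(\pi_1(F_1)) \,\subseteq\, \rho(A) \cap \rho(\Gamma_1) \,=\, \rho(A) \cap H \,=\, \{1\},$$
the last equality because a subgroup of the finite group $\rho(A)$ sitting inside the torsion-free group $H$ must be trivial. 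This handles the Zariski dense case, and centrality of $A$ also disposes of the conjugation ambiguity noted in the statement.

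For the second assertion I would reduce to the first. Let $G' \subset G$ be the (reductive) Zariski closure of $\rho(\Gamma)$ and $(G')^0$ its identity component, a connected reductive group. Since $\rho(\Gamma)$ is dense in $G'$ it surjects onto the finite group $G'/(G')^0$, so $\Gamma' := \rho^{-1}((G')^0)$ is a normal finite-index subgroup of $\Gamma$ and corresponds to a finite Sasakian cover $M' \to M$ with $\pi_1(M') = \Gamma'$. I would then check that $\rho|_{\Gamma'}$ is Zariski dense in $(G')^0$: its closure $N$ is normal in $G'$ and contained in $(G')^0$, while the image of $\rho(\Gamma)$ in $G'/N$ is at once dense and a quotient of the finite group $\rho(\Gamma)/\rho(\Gamma')$, forcing $G'/N$ to be finite and hence $N = (G')^0$. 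Applying the first case to $(M',\, \Gamma',\, \rho|_{\Gamma'}\colon \Gamma' \to (G')^0)$ yields a finite cover $M_1 \to M'$, and hence of $M$, with $\rho(\pi_1(F_1)) = 1$ on every fiber.

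The hard part will be the fiber bookkeeping in the orbifold setting --- verifying that $\pi_1$ of every fiber $F_1$, including the singular (non-smooth) orbits, really does map into the central fiber subgroup $A$ whose $\rho$--image is finite --- together with the component-group step in the reductive case, where one must ensure the restricted representation remains Zariski dense after passing to $(G')^0$.
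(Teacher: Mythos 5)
Your overall strategy is the paper's own: combine Proposition \ref{finsas} with Corollary \ref{torsionfree}, pass to the finite cover determined by a torsion-free finite-index subgroup $H\,\subset\,\rho(\Gamma)$, and conclude because a finite subgroup of a torsion-free group is trivial. Your treatment of the second assertion --- replacing $G$ by the identity component $(G')^0$ of the Zariski closure $G'\,=\,\overline{\rho(\Gamma)}$ and $M$ by the cover $M'$ with $\rho(\pi_1(M'))\,\subset\, (G')^0$, then verifying that $\rho(\Gamma')$ is still Zariski dense in $(G')^0$ --- is also exactly the paper's reduction, and your density argument via the normal closure $N$ is correct.

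However, the step you yourself flagged as ``the hard part'' is genuinely false as stated: the class of a singular fiber does \emph{not} lie in the subgroup $A$ generated by the regular-fiber class. The relation runs in the opposite direction: if $F_1$ is a fiber of multiplicity $m$ with class $c$, then it is the \emph{regular}-fiber class $t$ that equals (up to conjugation) the power $c^m$, while $c$ itself generally lies outside $\langle t\rangle$. For example, on the Poincar\'e sphere $\Sigma(2,3,5)$ with its quasi-regular Sasakian structure, $A$ is the order-two center of the binary icosahedral group, while the three singular-fiber classes project onto nontrivial rotations of the icosahedral group and hence lie outside $A$. So the first containment in your displayed chain, $\rho(\pi_1(F_1))\,\subseteq\, \rho(A)\cap \rho(\Gamma_1)$, is unjustified. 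The repair is to prove less, which is precisely what the paper does: since $\rho(c)^m$ is conjugate to a power of $\rho(t)$, and $\rho(t)$ has finite order by Proposition \ref{finsas}, the element $\rho(c)$ has finite order; hence $\rho(\pi_1(F_1))$ is a \emph{finite} subgroup of $\rho(\Gamma_1)\,=\,H$, and finiteness together with torsion-freeness of $H$ forces triviality. With the claim ``lands in $A$'' weakened to ``has finite image under $\rho$,'' your proof is complete and coincides with the paper's.
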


\begin{proof}
Take $M_1$ to be the \'etale Galois covering of $M$ for the finite index subgroup
$\Gamma_1\, \subset\, \Gamma$ in the proof of Corollary \ref{torsionfree}. So
$\rho(\pi_1(M_1))$ is torsion-free. The image $\rho (\pi_1(F,\, x_0))$ is finite by
Proposition \ref{finsas}, and hence $\rho (\pi_1(F_1))$ is also a finite group. Therefore,
$\rho (\pi_1(F_1))$ is trivial because it is also torsion-free.

If the Zariski closure of $\rho(\Gamma)$ in $G$ is reductive, then replace $G$ by the
connected component of the Zariski closure of $\rho(\Gamma)$ in $G$ containing the
identity element, and also replace $M$ by the \'etale Galois covering $M'$ of $M$ such that
$\rho(\pi_1(M'))$ is contained in the above connected component. Now the second
part follows from the first part.
\end{proof}

Let $\rho\, :\, \Gamma\, \longrightarrow\, G$ be a homomorphism with $G$ as above. The Zariski closure 
of $\rho(\Gamma)$ in $G$ will be denoted by $\overline{\rho(\Gamma)}$. The connected 
component of $\overline{\rho(\Gamma)}$ containing the identity element will be
denoted by $\overline{\rho(\Gamma)}_0$.

\begin{lemma}\label{lem1}
If $\overline{\rho(\Gamma)}_0$ is not reductive, then $\overline{\rho(\Gamma)}_0$ is
contained in some proper parabolic subgroup of $G$.
\end{lemma}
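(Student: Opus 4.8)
Write $H\, :=\, \overline{\rho(\Gamma)}_0$, which is a connected algebraic subgroup of $G$. The plan is to locate the failure of reductivity inside the unipotent radical of $H$, and then invoke the Borel--Tits theorem to manufacture the required proper parabolic subgroup of $G$.

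First I would set $U\, :=\, R_u(H)$, the unipotent radical of $H$; that is, the maximal connected normal unipotent subgroup of $H$. Since $H$ is not reductive by hypothesis, $U$ is nontrivial. As $U$ is a characteristic subgroup of $H$, it is normalized by $H$, so that $H\, \subseteq\, N_G(U)$.

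Next I would apply the Borel--Tits theorem to the nontrivial unipotent subgroup $U$ of the connected reductive group $G$ (working over the algebraically closed field $\mathbb{C}$): there is a parabolic subgroup $P\, \subseteq\, G$ with $N_G(U)\, \subseteq\, P$ and $U\, \subseteq\, R_u(P)$. Here $P$ is necessarily proper, for if $P\,=\,G$ then $R_u(P)\,=\,R_u(G)\,=\,\{1\}$ by reductivity of $G$, forcing $U\,=\,\{1\}$, contrary to our choice. Combining the two inclusions yields
$$ H\, \subseteq\, N_G(U)\, \subseteq\, P\, , $$
with $P$ a proper parabolic subgroup of $G$, which is exactly the assertion.

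The substantive input is the Borel--Tits theorem; the only points requiring care are the verification of its hypotheses. Nontriviality of $U$ is precisely the nonreductivity of $H$, and the containment $H\, \subseteq\, N_G(U)$ rests on the standard fact that the unipotent radical is a characteristic, hence normal, subgroup of $H$. I expect this bookkeeping, rather than any computation, to be the only place demanding attention; everything else is a direct application of the cited structure theory.
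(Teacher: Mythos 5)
Your proof is correct and takes essentially the same approach as the paper: the paper's iterated normalizer-of-unipotent-radicals construction (with its citation of Humphreys, \S~30.3) is precisely the proof of the Borel--Tits theorem that you invoke as a black box, and both arguments rest on the same chain of containments $\overline{\rho(\Gamma)}_0 \,\subseteq\, N_G\bigl(R_u(\overline{\rho(\Gamma)}_0)\bigr) \,\subseteq\, P$ with $P$ proper because its unipotent radical is nontrivial.
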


\begin{proof}
Since $\overline{\rho(\Gamma)}_0$ is not reductive, the unipotent radical of
it is nontrivial. Denote the unipotent radical $R_u(\overline{\rho(\Gamma)}_0)$
by $S_0$. The normalizer of $S_0$ in $G$ will be denoted by $N_1$. The
unipotent radical of $N_1$ will be denoted by $S_1$. Inductively, define $S_i$
to be the unipotent radical of $N_i$, and $N_{j+1}$ to be the normalizer of
$S_j$ in $G$. We have
$$
\cdots\, \subset\,S_i\, \subset\, S_{i+1} \, \subset\, \cdots\, \subset\,
N_{j+1}\, \subset\, N_{j} \, \subset\, \cdots\,
$$
and $\bigcup_i S_i$ is the unipotent radical of $\bigcap_j N_j$, while
$\bigcap_j N_j$ is the normalizer of $\bigcup_i S_i$ in $G$. Therefore,
$\bigcap_j N_j$ is a parabolic subgroup of $G$; see
\cite[p.~185, \S~30.3]{Hu} for more details. Note that $\bigcap_j N_j$ is a
proper subgroup of $G$ because its unipotent radical $\bigcup_i S_i$ is nontrivial
as it contains $S_0$.
\end{proof}

We now give a criterion for $\overline{\rho(\Gamma)}$ to be reductive under the assumption
that $G\, =\, \text{GL}(r,{\mathbb C})$.
For a homomorphism $$\rho\, :\, \Gamma\, \longrightarrow\, \text{GL}(r,{\mathbb C})$$
the Zariski closure $\overline{\rho(\Gamma)}$ is reductive if and only if
${\mathbb C}^r$ decomposes into a direct sum of irreducible
$\rho(\Gamma)$--modules. Indeed, if $\overline{\rho(\Gamma)}$ is reductive, then the
$\overline{\rho(\Gamma)}$--module ${\mathbb C}^r$ decomposes into a direct sum of irreducible
$\overline{\rho(\Gamma)}$--modules. Hence ${\mathbb C}^r$ decomposes into a direct sum of irreducible
$\rho(\Gamma)$--modules. Conversely, if ${\mathbb C}^r$ decomposes into a direct sum of irreducible
$\rho(\Gamma)$--modules, then from Lemma \ref{lem1} it follows that $\overline{\rho(\Gamma)}$
is reductive.

\section{Sasakian Higgs bundles}

\subsection{Partial connection}

Take a connected $C^\infty$ manifold $X$. Take any $C^\infty$ subbundle of positive rank
$$
S\, \subset\, TX\otimes_{\mathbb R} {\mathbb C}
$$
which is closed under the
operation of Lie bracket of vector fields; such a subbundle is called integrable.
We have the dual of the inclusion map of $S$ in $TX\otimes_{\mathbb R} {\mathbb C}$
\begin{equation}\label{S1}
q_S\, :\, T^*X\otimes_{\mathbb R} {\mathbb C}\, =\, (TX
\otimes_{\mathbb R} {\mathbb C})^*\, \longrightarrow\, S^*\, .
\end{equation}

A \textit{partial connection on $E$ in the direction of} $S$ is a $C^\infty$
differential operator
$$
D\, :\, E \, \longrightarrow\, S^*\otimes E
$$
satisfying the Leibniz condition, which says that 
$$
D(fs) \,=\, fD(s) + q_S(df)\otimes s
$$
for a smooth section $s$ of $E$ and a smooth function $f$ on $X$,
where $q_S$ is the projection in \eqref{S1}.

Since the distribution $S$ is integrable, the smooth sections of ideal subbundle of the exterior
algebra bundle $\bigwedge T(^*X\otimes_{\mathbb R} {\mathbb C})$ generated by
$\text{kernel}(q_S)$ is 
closed under the exterior derivation. Therefore, we have an induced exterior
derivation acting on the smooth sections of $S^*$
\begin{equation}\label{ed}
\widehat{d}\, :\, S^*\, \longrightarrow\, \bigwedge\nolimits^2 S^*
\end{equation}
which is a differential operator of order one.

Let $D$ be a partial connection on $E$ in the direction of $S$. Consider the
differential operator
$$
D_1\, :\, S^*\otimes E\, \longrightarrow\, (\bigwedge\nolimits^2 S^*)
\otimes E
$$
defined by
$$
D_1(\theta\otimes s) \, =\, \widehat{d}(\theta)\otimes s
-\theta\wedge D(s)\, ,
$$
where $\widehat{d}$ is constructed in \eqref{ed}. The composition
\begin{equation}\label{ed2}
E \, \stackrel{D}{\longrightarrow}\, S^*\otimes E
\, \stackrel{D_1}{\longrightarrow}\, (\bigwedge\nolimits^2 S^*)
\otimes E
\end{equation}
is $C^\infty(X)$--linear. Therefore, the composition in \eqref{ed2} defines a
$C^\infty$ section
\begin{equation}\label{ed3}
{\mathcal K}(D)\, =\,
C^\infty(X, \,(\bigwedge\nolimits^2 S^*) \otimes E\otimes E^*)\, =\,
C^\infty(X, \,(\bigwedge\nolimits^2 S^*) \otimes \text{End}(E))\, .
\end{equation}

The section ${\mathcal K}(D)$ in \eqref{ed3} is called the \textit{curvature}
of $D$. If
$$
{\mathcal K}(D)\, =\, 0\, ,
$$
then the partial connection $D$ is called \textit{flat}.

\subsection{Holomorphic hermitian vector bundles}\label{se:holvec}

Let $M$ be a compact quasi-regular Sasakian manifold. The Riemannian metric
and the Reeb vector field on $M$ will be denoted by $g$ and $\xi$
respectively.
The almost complex structure on the orthogonal complement $$\xi^\perp\, \subset\, TM$$
for $g$ produces a type decomposition
$$
\xi^\perp\otimes_{\mathbb R} {\mathbb C}\,=\, F^{1,0}\oplus F^{0,1}\, .
$$
Define $F^{p,q}\, :=\, (\bigwedge^p F^{1,0})\otimes (\bigwedge^q F^{0,1})$. Let
\begin{equation}\label{w01}
\widetilde{F}^{0,1}\, :=\, F^{0,1}\oplus (\xi\otimes_{\mathbb R}
\mathbb C)\, \subset\, TM\otimes_{\mathbb R} {\mathbb C}
\end{equation}
be the distribution. It is known that
this distribution $\widetilde{F}^{0,1}$ is integrable \cite[p.~550, Lemma~3.2]{BS}.

A {\em Sasakian complex vector bundle} on the Sasakian manifold
$(M,\, g,\, 
\xi)$ is a pair $(E,\, D_0)$, where $E$ is a $C^\infty$ complex vector bundle
on $M$, and $D_0$ is a partial connection on $E$ in the direction $\xi$.

A {\em hermitian structure} on a Sasakian complex vector bundle $(E,\, D_0)$ is a 
$C^\infty$ hermitian structure on the complex vector bundle $E$ preserved by the 
partial connection $D_0$.

\begin{definition}\label{def1}
A {\em holomorphic structure} on a Sasakian complex vector bundle $(E,\, D_0)$
is a flat partial connection $D$ on $E$ in the direction of
$\widetilde{F}^{0,1}$ (constructed in \eqref{w01}) satisfying the
compatibility condition that $D_0$ coincides with the partial
connection on $E$, in the direction of $\xi$, defined by $D$.

A {\em Sasakian holomorphic vector bundle} is a Sasakian complex vector
bundle equipped with a holomorphic structure.
\end{definition}

Let $(E,\, D)$ be a Sasakian holomorphic
vector bundle on $M$ equipped with a hermitian metric $h$.
There is a unique connection $\nabla$ on the
complex vector bundle $E$ satisfying the following two conditions:
\begin{enumerate}
\item $\nabla$ preserves $h$, and

\item the partial connection on $E$ in the direction of
$\widetilde{F}^{0,1}$ induced by $\nabla$ coincides with $D$.
\end{enumerate}
The second condition is equivalent to the condition that the curvature
of $\nabla$ is a smooth section of $(F^{1,1})^*\otimes E \otimes E^*$.

Let ${\mathcal K}(E,h)\, :=\, {\mathcal K}(\nabla)$ be the curvature of the above connection 
$\nabla$; as mentioned above, it is a section of $(F^{1,1})^*\otimes E \otimes E^*$.

Let $(E,\, D_{E})$ and $(E',\, D_{E'})$ be two Sasakian holomorphic vector bundles on
$(M,\, g,\, \xi)$. A fiberwise $\mathbb C$--linear $C^\infty$ map
$$
\Psi \,: \,E'\,\longrightarrow\, E''
$$
is called {\em holomorphic}, if $\Psi$ intertwines $D_E$ and $D_{E'}$. A holomorphic
section of $(E',\, D_{E'})$ is a holomorphic homomorphism to it from
the trivial complex line bundle on $M$ equipped with the trivial Sasakian holomorphic structure
given by the trivial partial connection on it in the direction of $\widetilde{F}^{0,1}$.

Using the Levi--Civita connection on $M$ for $g$, the vector bundle $(F^{1,0})^*$ gets
a flat partial connection along $\widetilde{F}^{0,1}$, thus making $(F^{1,0})^*$ a
Sasakian holomorphic vector bundle. Let $E$ be a Sasakian 
holomorphic vector bundle on $(M,\, g,\, \xi)$. Then $\text{End}(E)$ also has the 
structure of a Sasakian holomorphic vector bundle. Hence
$\text{End}(E)\otimes (F^{1,0})^*$ is a Sasakian holomorphic vector bundle.

A Higgs field on a Sasakian holomorphic vector bundle $(E,\, D_{E})$ is a holomorphic section $\theta$ of
$\text{End}(E)\otimes (F^{1,0})^*$ such that the section $\theta\wedge\theta$ of
$\text{End}(E)\otimes (F^{2,0})^*$ vanishes identically. A \textit{Higgs vector
bundle} on the Sasakian manifold $M$ is a Sasakian holomorphic vector bundle on $M$ equipped
with a Higgs field.

Let $H$ be a Lie group and $q\, :\, E_H\,\longrightarrow\, M$ a $C^\infty$ principal
$H$--bundle on $M$. Let $$dq\, :\, TE_H\,\longrightarrow\, q^* TM$$ be the differential
of $q$. A partial 
connection on $E_H$ in the direction of $\xi$ is a $H$--equivariant homomorphism
$$
D_0\, :\, q^*\xi \,\longrightarrow\, q^* TE_H
$$
such that $(dq)\circ D_0$ coincides with the identity map of $q^*\xi$. In other words,
$D_0$ is a $H$--equivariant lift of $\xi$ to 
the total space of $E_H$. A \textit{Sasakian principal $H$--bundle} on $M$ is a
$C^\infty$ principal 
$H$--bundle $E_H$ on $M$ equipped with a partial connection in the direction of 
$\xi$. Let $H$ be a complex Lie group and $(E_H,\, D_0)$ a Sasakian principal 
$H$--bundle on $M$. A holomorphic structure on $E_H$ is an $H$--invariant lift $D$ of the 
subbundle $\widetilde{F}^{0,1}\, \subset\, TM\otimes_{\mathbb R} {\mathbb C}$
(see \eqref{w01}) to $TE_H\otimes_{\mathbb R} {\mathbb C}$ such that restriction of
$D$ to $\xi$ coincides with
the given lift $D_0$. A \textit{holomorphic Sasakian principal $H$--bundle} is
a Sasakian principal $H$--bundle equipped with a holomorphic structure.

Let $(E_H,\, D)$ be a holomorphic Sasakian principal $H$--bundle on $M$, and let $H_1$ be a complex
Lie subgroup of $H$. A $C^\infty$ reduction of structure group $E_{H_1}\, \subset\, E_H$
to $H_1$ is called \textit{holomorphic} if for every $z\, \in\, E_{H_1}$,
the image of $\widetilde{F}^{0,1}$ in $T_zE_H\otimes
{\mathbb C}$ under $D$ is contained in $T_zE_{H_1}\otimes {\mathbb C}$.

Let $G$ be a connected complex reductive affine algebraic group. Fix a maximal compact 
subgroup $$K_G\, \subset\, G\, .$$ Let $(E_G,\, D)$ be a Sasakian holomorphic 
principal $G$--bundle. A Hermitian structure on it is a $C^\infty$ reduction of 
structure group
$$
E_G\, \supset\, E_K\, \stackrel{q}{\longrightarrow}\, M
$$
such that for any $z\, \in\, E_K$, the image $D(\xi(q(z)))\,\in\, T_zE_G$ is contained
in $T_z E_K$. Note that this condition is equivalent to the condition that $D$ induces
a Sasakian principal $K$--bundle structure on $E_K$.

For any Hermitian structure $E_K$ as above, there is a unique connection 
$\nabla$ on the principal $K$--bundle $E_K$ such that for every $z\in\, E_K$, the lift of
$\widetilde{F}^{0,1}(z)$ to $T_z E_K\otimes_{\mathbb R} {\mathbb C}$ given by $\nabla$ coincides
with the one given by $D$. This condition is equivalent to to the condition that the
curvature of $\nabla$ is a section of $(F^{1,1})^*\otimes \text{ad}(E_G)$, where $${\rm ad}(E_G)\,=\,
E_G\times^G {\mathfrak g}$$ is the vector bundle on $M$ associated to $E_G$ for the
adjoint action of $G$ on its Lie algebra $\mathfrak g$; this $\text{ad}(E_G)$ is also called
the adjoint vector bundle for $E_G$.

Let $(E_G,\, D_0)$ be a Sasakian principal $G$--bundle. Note that any vector bundle associated to 
$E_G$ has the structure of a Sasakian vector bundle using $D_0$. In particular, the adjoint vector 
bundle $\text{ad}(E_G)$ is a Sasakian vector bundle. A holomorphic structure $D$ on $(E_G,\, D_0)$ 
produces a holomorphic structure on any associated vector bundle.

A Higgs field on a Sasakian holomorphic
principal $G$--bundle $(E_G,\, D)$ is a holomorphic section of
$\text{ad}(E_G)\otimes (F^{1,0})^*$ such that the section $\theta\wedge\theta$ of
$\text{ad}(E)\otimes (\bigwedge^2 F^{1,0})^*$ vanishes identically.

Let $((E_G,\, D),\, \theta)$ be a Sasakian $G$--Higgs bundle, and let $P\,\subsetneq\, G$ be a maximal
parabolic subgroup. A reduction $E_P$ of $((E_G,\, D),\, \theta)$ to $P$ over a big open subset
$U$ means the following:
\begin{itemize}
\item $U\, \subset\, M$ is an open subset preserved by the action of ${\rm U}(1)$ such that
the complement $B\setminus f(U)$ (the map $f$ is defined in \eqref{f}) is a complex analytic
subset of complex codimension at least two,

\item $E_P\, \subset\, E_G\vert_U$ is a holomorphic reduction of structure group over $U$, and

\item the Higgs field $\theta$ is a section of the subbundle $\text{ad}(E_P)\otimes (F^{1,0})^*\,
\subset\, \text{ad}(E_G)\otimes (F^{1,0})^*$.
\end{itemize}

A Sasakian $G$--Higgs bundle $((E_G,\, D),\, \theta)$ is called
\textit{stable} (respectively, \textit{semistable}) if for all maximal
parabolic subgroup $P\,\subsetneq\, G$ and every reduction $E_P$ of $((E_G,\, D),\, \theta)$ to $P$
over every big open subset $U$,
$$
\text{degree}({\rm ad}(E_P))\, <\, 0 \ \ \ \text{(respectively, }~\text{degree}({\rm ad}(E_P))\,
\leq\, 0 {\rm )}\, .
$$

A Sasakian $G$--Higgs bundle $((E_G,\, D),\, \theta)$ is called \textit{polystable} if
the following conditions hold:
\begin{enumerate}
\item $((E_G,\, D),\, \theta)$ is semistable, and

\item there is a Levi subgroup $L$ of some parabolic subgroup of $G$ and a holomorphic reduction
$E_L\, \subset\, E_G$ to $L$, such that
\begin{itemize}
\item $\theta$ is a section of the subbundle $\text{ad}(E_L)\otimes (F^{1,0})^*\,
\subset\, \text{ad}(E_G)\otimes (F^{1,0})^*$, and

\item $((E_L,\, D),\, \theta)$ is stable.
\end{itemize}
\end{enumerate}

\begin{definition}\label{def2}
A Sasakian $G$--Higgs bundle $(V,\, \theta)$ on $M$ will be called {\it basic} if 
$(V,\, \theta)$ descends to the projective variety $M/\text{U}(1)$. In other words,
there is a Higgs $G$--Higgs bundle $(W,\, \theta')$ on $M/\text{U}(1)$ such that
$(f^*W,\, f^*\theta')$ equipped with the natural Sasakian $G$--Higgs bundle structure
is isomorphic $(V,\, \theta)$.

The characteristic classes of a basic Sasakian $G$--Higgs bundle $(V,\, \theta)$
are defined to be the corresponding characteristic classes of the above
principal $G$--bundle $W$.

A Sasakian $G$--Higgs bundle $(V,\, \theta)$ on $M$ will be called {\it virtually basic}
if there is finite \'etale Galois covering $\widetilde{M}\, \longrightarrow\, M$
such that the pullback of $(V,\, \theta)$ to $\widetilde{M}$ is basic. Note that
$\widetilde{M}$ is also a quasi-regular Sasakian manifold.

The characteristic classes of a virtually basic Sasakian $G$--Higgs bundle $(V,\, \theta)$
on $M$ are defined to be the corresponding characteristic classes of the
basic Sasakian $G$--Higgs bundle on $\widetilde{M}$.
\end{definition}

Note that the condition that the rational characteristic classes of a virtually basic 
Sasakian $G$--Higgs bundle on $M$ vanish does not depend on the choice of the 
covering $\widetilde{M}$. This condition can be interpreted in terms of equivariant
cohomology.

An example in Section \ref{ex} shows why the above class of Higgs bundles is relevant.

\section{The Flat Connection-Higgs Bundle Correspondence}

The purpose of this Section is to establish the following main Theorem of this paper:

\begin{theorem} \label{main}
Let $G$ be a connected reductive complex affine algebraic group. Let $M$ be a
quasi-regular Sasakian manifold
with fundamental group $\Gamma$. Any homomorphism
$$\rho\, :\, \Gamma\,\longrightarrow\, G$$ with the Zariski closure of $\rho(\Gamma)$
reductive canonically gives a virtually basic polystable
principal $G$--Higgs bundle on $M$ with
vanishing rational characteristic classes. Conversely, any 
virtually basic polystable principal $G$--Higgs bundle on $M$ with 
vanishing rational characteristic classes corresponds to a flat principal
$G$--bundle on $M$ with the property that the Zariski closure of the monodromy
representation is reductive.
\end{theorem}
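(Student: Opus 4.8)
The plan is to reduce the entire correspondence to the Kähler orbifold base $B = M/\mathrm{U}(1)$ via the $\mathrm{U}(1)$--fibration $f\colon M \to B$ of \eqref{f}, and then invoke the orbifold version of the Donaldson-Corlette-Hitchin-Simpson correspondence on $B$ (which I would treat as known, citing \cite{Si2} and its orbifold extensions). The bridge is Definition \ref{def2}: a \emph{basic} Sasakian $G$--Higgs bundle on $M$ is by definition a pullback $f^*(W,\, \theta')$ of a $G$--Higgs bundle on $B$, so the first step is to set up a dictionary translating the Sasakian notions on $M$ into the orbifold-Kähler notions on $B$. Concretely I would check that a holomorphic reduction $E_P \subset E_G|_U$ over a $\mathrm{U}(1)$--invariant big open $U$ corresponds to a holomorphic reduction over the big open $f(U) \subset B$, and that the Sasakian degree of $\operatorname{ad}(E_P)$ computed against $\omega$ equals the orbifold degree of the descended reduction; consequently (poly)stability of a basic Higgs bundle on $M$ is equivalent to (poly)stability of $(W,\, \theta')$ on $B$. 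Similarly, the rational characteristic classes of the basic bundle, defined in \ref{def2} as those of $W$, vanish precisely when the orbifold Chern classes of $W$ vanish. With this dictionary, the correspondence on $M$ for basic bundles is literally the correspondence on $B$.

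For the forward direction, start with $\rho\colon \Gamma \to G$ whose Zariski closure is reductive. By Corollary \ref{finsascor} there is a finite étale Galois cover $M_1 \to M$ for which $\rho(\pi_1(F_1))$ is trivial on a fiber $F_1$. Writing $\Gamma_1 = \pi_1(M_1)$, the restriction $\rho|_{\Gamma_1}$ therefore kills the fiber image and, by the orbifold homotopy sequence used in Proposition \ref{finsas}, factors through $\pi_1^{orb}(B_1)$, where $B_1 = M_1/\mathrm{U}(1)$. Applying the orbifold correspondence on $B_1$ to the induced $\bar\rho\colon \pi_1^{orb}(B_1) \to G$ produces a polystable $G$--Higgs bundle $(W,\, \theta')$ on $B_1$ with vanishing rational Chern classes (the vanishing being automatic for objects arising from flat connections). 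Pulling back by $f_1\colon M_1 \to B_1$ gives a basic polystable Sasakian $G$--Higgs bundle on $M_1$ with vanishing rational characteristic classes, which by construction is $\operatorname{Gal}(M_1/M)$--equivariant and hence descends to a virtually basic object on $M$ in the sense of Definition \ref{def2}.

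For the converse, take a virtually basic polystable $G$--Higgs bundle on $M$ with vanishing rational characteristic classes. By Definition \ref{def2} there is a finite étale cover $\widetilde{M} \to M$ on which the pullback is basic, of the form $f^*(W,\, \theta')$ for a $G$--Higgs bundle $(W,\, \theta')$ on $\widetilde{B} = \widetilde{M}/\mathrm{U}(1)$. By the dictionary of the first paragraph, $(W,\, \theta')$ is polystable with vanishing rational orbifold Chern classes, so the orbifold correspondence on $\widetilde{B}$ yields a reductive representation of $\pi_1^{orb}(\widetilde{B})$ into $G$. Pulling this back along $\widetilde{M} \to \widetilde{B}$ and using the homotopy sequence gives a representation of $\pi_1(\widetilde{M})$ with reductive Zariski closure, trivial on the fiber. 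The remaining task is to promote this to a representation of all of $\Gamma$: using the equivariant structure on the Higgs bundle over $\widetilde{M}$ one extends the flat connection across the deck group to obtain a flat $G$--bundle on $M$, whose monodromy $\Gamma \to G$ then has reductive Zariski closure, this reductivity being deduced from its reductivity on the finite-index subgroup $\pi_1(\widetilde{M})$ via Lemma \ref{lem1} and the subsequent criterion.

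I expect the main obstacle to be twofold. First, making the stability dictionary of the opening paragraph fully precise: one must verify that the correspondence between $\mathrm{U}(1)$--invariant big open subsets $U \subset M$ and big open subsets of $B$ is degree-preserving and that no destabilizing reduction is lost in passing between $M$ and $B$ --- this is exactly where the quasi-regular hypothesis and the codimension-$\geq 2$ condition built into the definition of a reduction do the real work. Second, the descent arguments in both directions require that the Higgs bundle (respectively, flat connection) constructed on the cover be \emph{canonically} equivariant under the deck group, so that it genuinely descends; checking this equivariance, and that the resulting object on $M$ is independent of the auxiliary choices of cover, base point, and connecting path, is precisely what makes the correspondence canonical as asserted.
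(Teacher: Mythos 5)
Your overall skeleton (use Corollary \ref{finsascor} to pass to a finite cover on which the representation kills the fibers, work on the base, and descend back by equivariance under the deck group) matches the paper, but the proposal has a genuine gap at its center: the entire analytic content is outsourced to an ``orbifold version of the Donaldson--Corlette--Hitchin--Simpson correspondence'' that you treat as known, citing \cite{Si2} ``and its orbifold extensions.'' No such statement is in \cite{Si2}, and the paper never invokes one; its proof is structured precisely to avoid needing it. What the paper does instead is: (i) use the \emph{full} strength of Corollary \ref{finsascor} --- triviality of $\rho$ on $\pi_1$ of \emph{every} fiber, singular ones included --- to descend the representation to the fundamental group of the \emph{quotient projective variety} $\widetilde{B}=\widetilde{M}/\mathrm{U}(1)$, not of the orbifold; (ii) pass to a desingularization $\delta\colon Z \longrightarrow \widetilde{B}$ and apply the classical correspondence for smooth projective varieties (\cite{Co}, \cite{Do}, \cite{Hi}, \cite{Si1}, \cite{Si2}, \cite{BSu}) on $Z$; (iii) descend the resulting objects from $Z$ back to $\widetilde{B}$, using triviality of the flat bundle on the fibers of $\delta$ in one direction and Koll\'ar's theorem that $\delta_*\colon \pi_1(Z)\longrightarrow\pi_1(\widetilde{B})$ is an isomorphism in the other, with \cite{ES} supplying the descent statements. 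Unless you can give a precise reference or proof of the orbifold correspondence you invoke, your argument does not prove the theorem; it reduces it to an unproved statement of comparable difficulty.

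Second, and related: you conflate the orbifold $B_1$ with the quotient projective variety $B_1$, and this matters because of Definition \ref{def2}. ``Basic'' means the Sasakian Higgs bundle descends to the projective variety $M/\mathrm{U}(1)$, not merely to the orbifold. In your forward direction you only use that $\rho|_{\Gamma_1}$ factors through $\pi_1^{orb}(B_1)$ (which needs triviality on the regular fiber alone), and an orbifold correspondence would then hand you an \emph{orbifold} Higgs bundle; its pullback to $M_1$ is not automatically basic, since at a singular point of $B_1$ the local isotropy group may act nontrivially on the fibers, in which case nothing descends to the quotient variety. To rule this out you must use triviality of $\rho$ on the singular fibers as well --- exactly the stronger condition the paper isolates when it stresses that $\widetilde{B}$ is being treated ``just as a quotient space and not as an orbifold.'' This omission is fixable (Corollary \ref{finsascor} does cover all fibers, so the isotropy actions are trivial and one can descend), but as written your construction is not shown to produce an object satisfying Definition \ref{def2}. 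The parts of your proposal that do align with the paper --- the degree/stability dictionary between $\mathrm{U}(1)$--invariant big open subsets of $M$ and big open subsets of the base, and the insistence on canonical equivariance under the Galois group so that everything descends to $M$ --- are correct in spirit; the paper handles the former implicitly and obtains the latter from the naturality of the correspondence applied on $Z$.
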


Section \ref{rep2higgs} proves the forward direction and Section \ref{higgs2rep} 
establishes the converse direction of the above Theorem.

\subsection{From flat connections to Higgs bundles}\label{rep2higgs}

As before, $G$ is a connected reductive complex affine algebraic group.
Let $$\rho\, :\, \Gamma\,\longrightarrow\, G$$ be a
homomorphism such that the Zariski closure $\overline{\rho(\Gamma)}$ is reductive.

Let
\begin{equation}\label{varp}
\varphi\, :\, \widetilde{M}\, \longrightarrow\, M
\end{equation}
be a finite \'etale Galois covering. Then $\widetilde{M}$ is also a quasi-regular Sasakian
manifold. The Reeb vector field on $\widetilde{M}$, which is simply the inverse image
of $\xi$, will be denoted by $\widetilde{\xi}$; note that the differential
$$
d\varphi\, :\, T\widetilde{M}\, \longrightarrow\, \varphi^*TM
$$
is an isomorphism. Fix a point $\widetilde{x}_0\,\in\,
\widetilde{M}$ over $x_0$. Each orbit of $\widetilde{\xi}$ defines
a conjugacy class in $\pi_1(\widetilde{M},\, \widetilde{x}_0)$; however, all regular orbits
define the same conjugacy class. The normal
subgroup of $\pi_1(\widetilde{M},\, \widetilde{x}_0)$ generated by all the conjugacy
classes given by the orbits of $\widetilde{\xi}$ will be denoted by $\Gamma_0$.

{}From Corollary \ref{finsascor} we know that there is a finite \'etale Galois covering
$\varphi$ as above such that the composition
$$
\Gamma_0\, \hookrightarrow\, \pi_1(\widetilde{M},\, \widetilde{x}_0)
\, \stackrel{\varphi_*}{\longrightarrow}\, \Gamma \,
\stackrel{\rho}{\longrightarrow}\, G
$$
is the trivial homomorphism. Fix such a covering $\varphi$.

Let
$$
\rho'\, :\, \pi_1(\widetilde{M},\, \widetilde{x}_0)\, \longrightarrow\, G
$$
be the composition $\pi_1(\widetilde{M},\, \widetilde{x}_0)
\, \stackrel{\varphi_*}{\longrightarrow}\, \Gamma \,
\stackrel{\rho}{\longrightarrow}\, G$. For notational convenience, the Galois group
$\text{Gal}(\varphi)$ for the covering $\varphi$ will henceforth be denoted by $\Pi$.

Since $\rho'\vert_{\Gamma_0}$ is the trivial homomorphism, the homomorphism $\rho'$
descends to a homomorphism, to $G$, from the fundamental group of the quotient space
$\widetilde{B}\, :=\, \widetilde{M}/\text{U}(1)$.

It should be clarified that here we consider $\widetilde{B}$ just as a quotient space 
and not as an orbifold. Note that the necessary and sufficient condition for the 
homomorphism $\rho'$ to descend to a homomorphism to $G$ from the fundamental group of the 
orbifold $\widetilde{M}/\text{U}(1)$ is that the restriction of
$\rho'$ to the normal subgroup of $\pi_1(\widetilde{M},\, \widetilde{x}_0)$ generated
by the regular orbits of $\widetilde{\xi}$ is trivial. The stronger condition
that $\rho'\vert_{\Gamma_0}$ is trivial ensures that $\rho'$
descends to a homomorphism, to $G$, from the fundamental group of the quotient space
$\widetilde{B}$.

The image of $\widetilde{x}_0$
in $\widetilde{B}$ will be denoted by $y_0$. Let
$$
\rho'_0\, :\, \pi_1(\widetilde{B},\, y_0)\, \longrightarrow\, G
$$
be the homomorphism given by $\rho'$. Let
\begin{equation}\label{varp0}
\beta\, :\, (E'_G,\, \nabla)\, \longrightarrow\, \widetilde{B}
\end{equation}
be the associated flat principal $G$--bundle.

The map $\varphi$ in \eqref{varp} descends to a map
\begin{equation}\label{varp2}
\widetilde{\varphi}\, :\, \widetilde{M}/\text{U}(1)\,=\,
\widetilde{B} \, \longrightarrow\, M/\text{U}(1)\,=\, B\, .
\end{equation}
The action of the Galois group $\Pi$ on $\widetilde{M}$ descends to
$\widetilde{B}$. Consequently, $\widetilde{\varphi}$ in \eqref{varp2}
is an \'etale Galois covering with Galois group $\Pi$.

Let
\begin{equation}\label{wtf}
\widetilde{f}\, :\, \widetilde{M}\, \longrightarrow\,\widetilde{B}
\end{equation}
be the quotient map. Since the flat principal $G$--bundle
$$
(\widetilde{f}^* E'_G,\, \widetilde{f}^*\nabla)\, \longrightarrow\, \widetilde{M}
$$
(see \eqref{varp0}) is the pull back of the flat principal $G$--bundle on $M$ associated to $\rho$. 
Therefore, $(\widetilde{f}^* E'_G,\, \widetilde{f}^*\nabla)$ is $\Pi$--equivariant, meaning it is 
equipped with a lift of the Galois action of $\Pi$ on $\widetilde M$. This action of $\Pi$ on
$(\widetilde{f}^* E'_G,\, \widetilde{f}^*\nabla)$ descends to an action of $\Pi$ on
$(E'_G,\, \nabla)$, because the map $\widetilde{f}$ is $\Pi$--equivariant.
Consequently, $(E'_G,\, \nabla)$ is a $\Pi$--equivariant flat principal $G$--bundle.

Take a desingularization
$$
\delta\, :\, Z\, \longrightarrow\, \widetilde{B}\, .
$$
Let $(V,\, \theta)$ be the polystable $G$--Higgs bundle on $Z$ corresponding to
the flat principal $G$--bundle $(\delta^*E'_G,\, \delta^*\nabla)$
on $Z$ \cite{Co}, \cite{Do}, \cite{Si2}, \cite{BSu}. All the rational characteristic classes
of $V$ vanish because the principal $G$--bundle $V$ is topologically isomorphic
to $\delta^*E'_G$. The
restriction of $(\delta^*E'_G,\, \delta^*\nabla)$ to any fiber of $\delta$ is trivial.
Hence $(V,\, \theta)$ descends to $\widetilde{B}$; this descended
$G$--Higgs bundle on $\widetilde{B}$ will be denoted by $(V_0,\, \theta_0)$ (see
\cite{ES}). The action of the Galois group $\Pi$ on $(E'_G,\, \nabla)$ produces
an action of $\Pi$ on $(V_0,\, \theta_0)$.

Therefore, the pullback $(\widetilde{f}^*V_0,\, \widetilde{f}^*\theta_0)$ by the
map $\widetilde f$ in \eqref{wtf} is a $\Pi$--equivariant principal $G$--Higgs bundle.
Hence, $(\widetilde{f}^*V_0,\, \widetilde{f}^*\theta_0)$ descends to $M$.%

Note that the above constructed $G$--Higgs bundle on $M$ is virtually basic; see 
Definition \ref{def2}.%
All the rational characteristic classes of this virtually basic $G$--Higgs bundle
on $M$ vanish because all the rational characteristic classes of $V$ vanish
(see Definition \ref{def2}).

\subsection{From Higgs bundles to flat connections}\label{higgs2rep}

Let $(V,\, \theta)$ be a Sasakian virtually basic $G$--Higgs bundle on $M$
such that all the rational characteristic classes of $V$
vanish. Fix a Galois \'etale covering
$$
\varphi\, :\, \widetilde{M}\, \longrightarrow\, M
$$
such that $(\varphi^*V,\, \varphi^*\theta)$ is basic. Let
$(V',\, \theta')$ be the $G$--Higgs bundle on $\widetilde{B}\,:=\, \widetilde{M}/\text{U}(1)$.
As before, the Galois group $\text{Gal}(\varphi)$ will be denoted by $\Pi$.

Fix a desingularization
$$
\delta\, :\, Z\, \longrightarrow\,\widetilde{B}\, .
$$
The Higgs $G$--bundle $(\delta^*V',\, \delta^*\theta')$ corresponds to a flat principal
$G$--bundle $(F_G,\, \nabla)$ on $Z$ \cite{Si1}, \cite{Si2}, \cite{Hi}, \cite{BSu}.
The Zariski closure of the monodromy representation for $\nabla$ is reductive. Since
$$
\delta_*\,:\, \pi_1(Z)\, \longrightarrow\, \pi_1(\widetilde{B})
$$
is an isomorphism \cite[p.~203, Theorem~(7.5.2)]{Ko}, the above flat principal
$G$--bundle $(F_G,\, \nabla)$ descends to a flat principal
$G$--bundle $(F'_G,\, \nabla')$ on $\widetilde{B}$; see \cite{ES}.

The above flat principal $G$--bundle $(F'_G,\, \nabla')$ on $\widetilde{B}$ is
$\Pi$--equivariant, because $(V',\, \theta')$ is $\Pi$--equivariant. Hence the pullback
$(\widetilde{f}^*F'_G,\, \widetilde{f}^*\nabla')$ is also $\Pi$--equivariant,
where $\widetilde f$, as before, is the quotient map $\widetilde{M}\,\longrightarrow\,
\widetilde{M}/\text{U}(1)$. Consequently,
this flat principal $G$--bundle $(\widetilde{f}^*F'_G,\, \widetilde{f}^*\nabla')$
descends to a flat principal $G$--bundle on $M$. The monodromy
representation for this flat principal $G$--bundle on $M$ is reductive because
the monodromy representation for $\nabla$ is reductive.

\subsection{An example}\label{ex}

We now give an example which shows that the condition ``virtually basic'' in
Definition \ref{def2} is essential.

Consider the unit three-sphere $$M\, =\, S^3\, :=\, \{(z_1,\, z_2)\, \in\, {\mathbb C}^2\,
\mid\, |z_1|^2+|z_2|^2\,=\, 1\}$$ equipped with the standard Sasakian structure. The
Riemannian metric is the one obtained by restricting the standard metric on
${\mathbb C}^2$. The Reeb vector field is given by the diagonal action of
${\rm U}(1)$ on ${\mathbb C}^2$ for the standard action of it on ${\mathbb C}$. So
the action of any $c\, \in\, {\rm U}(1)$ sends any $(z_1,\, z_2)\, \in\, {\mathbb C}^2$
to $(c\cdot z_1,\, c\cdot z_2)$.
So in \eqref{f}, the surface $B$ is ${\mathbb C}{\mathbb P}^1$, and $f$
defines the Hopf fibration on it.

Take any integer $n\, \not=\, 0$, and consider the nontrivial line bundle
$L\, :=\, {\mathcal O}_{{\mathbb C}{\mathbb P}^1}(n)$ on ${\mathbb C}{\mathbb P}^1$
of degree $n$. The pullback $f^*L$ has a tautological action of ${\rm U}(1)$
simply because it is pulled back from ${\mathbb C}{\mathbb P}^1$. Let $\mathbb L$
denote this holomorphic line bundle on the Sasakian manifold $M$. Let ${\mathbb L}_0$
denote the trivial holomorphic line bundle
on $M$ equipped with the trivial ${\rm U}(1)$--action. We note that ${\mathbb L}$
is not holomorphically isomorphic to ${\mathbb L}_0$ because ${\mathbb L}$ descends
to $L$ on ${\mathbb C}{\mathbb P}^1$, while ${\mathbb L}_0$ descends to the
trivial holomorphic line bundle on ${\mathbb C}{\mathbb P}^1$. Also note that the
complex line bundle $f^*L$ is topologically trivial because $H^2(M,\, {\mathbb Z})
\,=\, 0$. Therefore, $({\mathbb L},\, 0)$ is a nontrivial Higgs line bundle on
$M$ of vanishing characteristic classes; being of rank one $({\mathbb L},\, 0)$ is
polystable.

On the other hand, there is no nontrivial flat line bundle
on $S^3$ as it is simply connected. So Theorem \ref{main} is not valid for
$S^3$ if we drop the condition ``virtually basic'' in the statement.

\section*{Acknowledgements}

We thank Carlos Simpson for a useful discussion. The authors acknowledge the support 
of their respective J. C. Bose Fellowships.


\begin{thebibliography}{ZZZZ}

\bibitem[BSc]{BS} I. Biswas and G. Schumacher, Vector bundles on Sasakian manifolds, 
{\it Adv. Theo. Math. Phy.} {\bf 14} (2010), 541--561.

\bibitem[BSu]{BSu} I. Biswas and S. Subramanian, Flat holomorphic
connections on principal bundles over a projective manifold,
\textit{Trans. Amer. Math.\ Soc.} \textbf{356} (2004), 3995--4018.

\bibitem[BG]{BG} C. P. Boyer and K. Galicki, {\it Sasakian geometry},
Oxford Mathematical Monographs, Oxford University Press, Oxford, 2008.

\bibitem[Co]{Co} K. Corlette, Flat G-bundles with canonical metrics,
\textit{Jour. Diff. Geom.} \textbf{28} (1988), 361--382.

\bibitem[Do]{Do} S. K. Donaldson, Twisted harmonic maps and the self-duality
equations, {\it Proc. London Math. Soc.} {\bf 55} (1987), 127--131.

\bibitem[ES]{ES} P. Eyssidieux and F. Sala, Instantons and framed sheaves on K\"ahler
Deligne-Mumford stacks, arXiv:1404.3504 [math.AG].

\bibitem[Hi]{Hi} N. J. Hitchin, The self--duality equations on a Riemann surface,
\textit{Proc. London Math. Soc.} \textbf{55} (1987), 59--126.

\bibitem[Hu]{Hu} J. E. Humphreys, \textit{Linear algebraic groups,} Graduate Texts in 
Mathematics, Vol. 21, Springer-Verlag, New York, Heidelberg, Berlin, 1987.

\bibitem[Ko]{Ko} J. Koll\'ar, Shafarevich maps and plurigenera of algebraic varieties,
{\it Invent. Math.} {\bf 113} (1993), 177--216.

\bibitem[Ma]{Ma} A. I. Malcev, On isomorphic matrix representations of infinite groups 
of matrices, {\it Mat. Sb.} {\bf 8} (1940), 405--422, \&~{\it Amer. Math. Soc. Transl.} 
{\bf 45} (1965), 1--18.

\bibitem[OV]{OV} L. Ornea and M. Verbitsky, Sasakian structures on $CR$-manifolds, 
{\it Geom. Dedicata} {\bf 125} (2007), 159--173.

\bibitem[Pl]{Pl} V. P. Platonov, A certain problem for finitely generated groups, {\it 
Dokl. Akad. Nauk BSSR} {\bf 12} (1968), 492--494.

\bibitem[Ru]{Ru} P. Rukimbira, Chern-Hamilton's conjecture and K-contactness,
{\it Houston Jour. Math.} {\bf 21} (1995), 709--718.

\bibitem[Se]{Se} A. Selberg, On discontinuous groups in higher-dimensional symmetric 
spaces, {\it Contributions to Function Theory}, pp. 147--164, Tata Institute of 
Fundamental Research, Bombay, 1960.

\bibitem[Si1]{Si1} C.~T. Simpson, Constructing variations of {H}odge structure using 
{Y}ang--{M}ills theory and applications to uniformization, {\it Jour. Amer. Math. Soc.}
\textbf{1} (1988), 867--918.

\bibitem[Si2]{Si2} C. T. Simpson, Higgs bundles and local systems,
\textit{Inst. Hautes \'Etudes Sci. Publ. Math.} \textbf{75} (1992),
5--95.

\end{thebibliography}
\end{document}